
\documentclass[12pt]{article}
\usepackage[margin=4.46cm]{geometry}
\usepackage{graphicx}
\usepackage{epsfig}
\usepackage{caption}
\usepackage{amsmath,amsfonts,amssymb,amsthm}
\usepackage[T1]{fontenc}
\usepackage{cases}
\usepackage{enumerate}
\usepackage{graphicx,color}
\usepackage{anyfontsize}
\usepackage{float}
\linespread{1.2}
\newcommand{\bsig}{\boldsymbol{\sigma}}
\newcommand{\commentout}[1]{}

\newcommand{\no}{\nonumber}
\newcommand{\be}{\begin{equation}}
\newcommand{\ee}{\end{equation}}
\newcommand{\ba}{\begin{eqnarray}}
\newcommand{\ea}{\end{eqnarray}}
\newcommand{\bi}{\begin{itemize}}
\newcommand{\ei}{\end{itemize}}

\newcommand{\eps}{\mbox{$\epsilon$}}

\newcommand{\real}{\mathbb{R}}

\newtheorem{theo}{Theorem}[section]

\newtheorem{prop}{Proposition}[section]

\newtheorem{cor}{Corollary}[section]

\allowdisplaybreaks
\begin{document}
\title{Residual Diffusivity in Elephant Random Walk Models with Stops}
\author{Jiancheng Lyu, \ Jack Xin, \ Yifeng Yu \thanks{Department of Mathematics, University of California at Irvine, Irvine, CA 92697.
Email: (jianchel,jack.xin,yifengy)@uci.edu. The work was partly supported by 
NSF grants DMS-1211179 (JX), DMS-1522383 (JX), DMS-0901460 (YY), and CAREER Award DMS-1151919 (YY).}}
\date{}
\maketitle

\begin{abstract}
We study the enhanced diffusivity in the so called elephant random walk model with stops (ERWS) by including symmetric random walk steps at small probability $\epsilon$. At any $\epsilon > 0$, the large time behavior transitions from sub-diffusive at $\epsilon = 0$ to diffusive in a wedge shaped parameter regime where the diffusivity is strictly above that in the un-perturbed ERWS model in the $\epsilon \downarrow 0$ limit. The perturbed ERWS model is shown to be solvable with the first two moments and their asymptotics calculated exactly in both one and two space dimensions. The model provides a discrete analytical setting of the residual diffusion phenomenon known for the passive scalar transport in chaotic flows (e.g. generated by time periodic cellular flows and statistically sub-diffusive) as molecular diffusivity tends to zero.     
\end{abstract}
\vspace{.2 in}

\hspace{.12 in} {\bf AMS Subject Classification:} 60G50, 60H30, 58J37.
\bigskip

\hspace{.12 in} {\bf Key Words:} Elephant random walk with stops, 

\hspace{.12 in} sub-diffusion, moment analysis, residual diffusivity.

\thispagestyle{empty}
\newpage

\section{Introduction}
\setcounter{equation}{0}
\setcounter{page}{1}
Residual diffusion is a remarkable phenomenon arising in large scale 
fluid transport from chaotic flows \cite{BCVV95,Mur17,LXY17}. 
It refers to the positive macroscopic effective diffusivity ($D^E$) as the microscopic 
molecular diffusivity ($D_0$) approaches zero, in the broader context of flow enhanced 
turbulent diffusion that has been studied for nearly a century \cite{T21,MK99}. 
An example of a chaotic smooth flow is the particle trajectories 
of the time periodic cellular flow ($X=(x,y) \in \real^2$):
\be\label{tdcell}
\boldsymbol{v}(X,t)=(\cos (y),\cos (x) ) + \theta\; \cos (t)\;(\sin (y),\sin (x)),
\quad
\theta \in (0,1].
\ee 
The first term of (\ref{tdcell}) is a steady cellular flow consisting of a periodic array of vortices, and the second term is a time periodic perturbation that introduces an increasing amount of disorder in the flow trajectories as $\theta$ becomes larger. At $\theta =1$, the flow is fully mixing, and empirically sub-diffusive \cite{ZCX15}. The flow (\ref{tdcell}) is a simple model of chaotic advection in Rayleigh-B\'enard experiment \cite{CW91}. The motion of a diffusing particle in the flow (\ref{tdcell}) satisfies the stochastic differential equation (SDE):
\be \label{sde1}
dX_t = \boldsymbol{v}(X_t,t)\, dt + \sqrt{2\,D_0}\, dW_t,\;\; X(0)=(x_0,y_0) \in \real^2, 
\ee
where $D_0> 0$ is molecular diffusivity, $W_t$ is the standard 2-dimensional Wiener process. The mean square displacement in the unit direction $e$ at large times is given by \cite{BLP2011}:
\be \label{sde2}
\lim_{t \uparrow +\infty} \, E(|(X(t) - X(0))\cdot e |^{2})/t = D^E,  
\ee
where $D^E = D^E(D_0,e,\theta) > D_0$ is the effective diffusivity. Numerical simulations \cite{BCVV95,Mur17,LXY17} based 
on the associated Fokker-Planck equations suggest that at $e=(1,0)$, $\theta = 1$, $D^E = O(1)$ as $D_0 \downarrow 0$, 
the {\it residual diffusion} occurs. In fact, $D^E = O(1)$ for $e=(0,1)$ and a range of values in $\theta \in (0,1)$ as 
well \cite{LXY17}. In contrast, at $\theta=0$, $D^{E}=O(\sqrt{D_0})$ as $D_0 \downarrow 0$, see  
\cite{FP94,H03,NPR05} for various proofs and generalized steady cellular flows. 
\medskip

Currently, the mathematical theory of residual diffusion remains elusive. 
In this paper, we analyze the residual diffusion phenomenon in a random walk model which is 
solvable in the sense of moments and has certain statistical features of the SDE model (\ref{sde2}). 
The baseline random walk model is the so called elephant random walk model with stops (ERWS) \cite{NUK10} which 
is non-Markovian and exhibits sub-diffusive, diffusive and super-diffusive regimes. For a 
review on various stochastic models of animal movement (including SDE and random walk models), 
momory effects and anomalous diffusion, see \cite{Sm10}. The sub-diffusive regime is absent in 
the earlier version of the ERW model without stops \cite{ST04}.
Stops in random walk models are often interpreted as occasional periods of 
rest during an animal's movement \cite{TPN16}.
Recall that the chaotic system from (\ref{sde1}) is sub-diffusive \cite{ZCX15} at $D_0=0$ and 
transitions to diffusive with residual diffusion at $D_0 >0$. To mimic this in the ERWS model, 
we add a small probability of symmetric random walk in the sub-diffusive regime and 
examine the large time behavior of the mean square displacement. 
Interestingly, the sub-diffusive regime also transitions into diffusive regime 
and a wedge shaped parameter region appears where the diffusivity is {\rm strictly above} that of the 
baseline ERWS model in the zero probability limit of the 
symmetric random walk (analogue of the zero molecular diffusivity limit). In the context of 
animal dispersal in ecology, the emergence of residual diffusion indicates that the large time statistical 
behavior of the movement can pick up positive normal diffusivity 
when the animal's rest pattern is slightly disturbed consistently in time.
We also extend our analysis to a two dimensional ERWS model (see \cite{CVS13} for 
a related solvable model). It is our hope that more diverse mathematical models of 
residual diffusivity can be developed and analyzed towards 
gaining understanding of the SDE residual diffusivity problem (\ref{sde1})-(\ref{sde2}) in the future. 
\medskip

The rest of the paper is organized as follows. 
In section 2, we present our perturbed model which is ERWS with a small probability ($\epsilon$) of symmetric 
random walk, analyze the first two moments and derive the large time asymptotics of the second moment. 
The residual diffusive behavior follows. In section 3, we generalize our results to a two dimensional perturbed ERWS model. Conclusions are in section 4.  

\medskip

\section{Perturbed ERWS and Moment Analysis}
\setcounter{equation}{0}
In this section, we show the perturbed ERWS model in one dimension and the analysis of the first two moments leading up to residual diffusivity.
\subsection{Perturbed ERWS}
Consider a random walker on a one-dimensional lattice with unit distance between adjacent lattice sites. Denote the position of the walker at time $t$ by $X_t$. Time is discrete ($t = 0, 1, 2, \dots$) and the walker starts at the origin, $X_0 = 0$. At each time step, $t \rightarrow t+1$, 
\begin{align*}
X_{t+1} = X_t+\sigma_{t+1},
\end{align*}
where $\sigma_{t+1} \in \left\{-1, 0, 1\right\}$ is a random number depending on $\left\{\sigma_t\right\} = \left(\sigma_1, \dots, \sigma_t\right)$ as follows. Let $p, q, r, \epsilon \in \left(0, 1\right)$ and $p+q+r=1$. The process is started at time $t=0$ by allowing the walker to move to the right with probability $s$ and to the left with probability $1-s$, $s\in \left(0, 1\right)$. For $t \geq 1$, a random previous time $k \in \left\{1,\dots, t\right\}$ is chosen with uniform probability.
\begin{enumerate}[(i)]
\item If $\sigma_k = \pm1$,
\begin{align*}
&P\left(\sigma_{t+1}=\sigma_k\right) = p,\; P\left(\sigma_{t+1}=-\sigma_k\right) = q,\\
&P\left(\sigma_{t+1}=0\right) = r.
\end{align*}
\item If $\sigma_k = 0$,
\begin{align*}
&P\left(\sigma_{t+1}=1\right) = P\left(\sigma_{t+1}=-1\right) = \epsilon/2,\\
&P\left(\sigma_{t+1}=0\right) = 1-\epsilon.
\end{align*}
\end{enumerate}
When $\eps =0$, the above model reduces to the ERWS model of \cite{NUK10}.

\subsection{Moment Analysis}
We calculate the first and second moments of $X_t$ below. 

\subsubsection{First moment $\langle X_t\rangle$}
At $t = 0$, it follows from the initial condition of the model for $\sigma = \pm1$ that
\begin{align*}
P\left(\sigma_1=\sigma\right) = \dfrac{1}{2}\left[1+\left(2s-1\right)\sigma\right].
\end{align*}
Let $\gamma = p-q$, for $t \geq 1$, it follows from the probabilistic structure of the model and $\sigma_k \in \left\{1, -1, 0\right\}$ that
\begin{align*}
P\left(\left.\sigma_{t+1}=1\right|\left\{\sigma_t\right\}\right) &= \dfrac{1}{t}\sum_{k=1}^t\left[\sigma_k^2\left(1\!+\!\sigma_k\right)\dfrac{p}{2}+\sigma_k^2\left(1\!-\!\sigma_k\right)\dfrac{q}{2}+\left(1\!-\!\sigma_k^2\right)\dfrac{\epsilon}{2}\right]\\
&= \dfrac{1}{t}\sum_{k=1}^t\left[\sigma_k^2\dfrac{1-r}{2}+\sigma_k\dfrac{\gamma}{2}+\left(1-\sigma_k^2\right)\dfrac{\epsilon}{2}\right]\\
&= \dfrac{1}{2t}\sum_{k=1}^t\left[\sigma_k^2\left(1-\epsilon-r\right)+\sigma_k\gamma\right]+\dfrac{\epsilon}{2},
\end{align*}
\begin{align*}
P\left(\left.\sigma_{t+1}=-1\right|\left\{\sigma_t\right\}\right) &= \dfrac{1}{t}\sum_{k=1}^t\left[\sigma_k^2\left(1\!-\!\sigma_k\right)\dfrac{p}{2}+\sigma_k^2\left(1\!+\!\sigma_k\right)\dfrac{q}{2}+\left(1\!-\!\sigma_k^2\right)\dfrac{\epsilon}{2}\right]\\
&= \dfrac{1}{t}\sum_{k=1}^t\left[\sigma_k^2\dfrac{1-r}{2}-\sigma_k\dfrac{\gamma}{2}+\left(1-\sigma_k^2\right)\dfrac{\epsilon}{2}\right]\\
&= \dfrac{1}{2t}\sum_{k=1}^t\left[\sigma_k^2\left(1-\epsilon-r\right)-\sigma_k\gamma\right]+\dfrac{\epsilon}{2},
\end{align*}
\begin{align*}
P\left(\left.\sigma_{t+1}=0\right|\left\{\sigma_t\right\}\right) &= \dfrac{1}{t}\sum_{k=1}^t\left[\sigma_k^2r+\left(1-\sigma_k^2\right)\left(1-\epsilon\right)\right]\\
&= \dfrac{1}{t}\sum_{k=1}^t\left[-\sigma_k^2\left(1-\epsilon-r\right)\right]+1-\epsilon\\
&= \dfrac{1}{2t}\sum_{k=1}^t\left[-2\sigma_k^2\left(1-\epsilon-r\right)\right]+1-\epsilon.
\end{align*}
Therefore, for $\sigma = \pm1, 0$,
\begin{align*}
P\left(\left.\sigma_{t+1}=\sigma\right|\left\{\sigma_t\right\}\right) = &\dfrac{1}{2t}\sum_{k=1}^t\left[\sigma_k^2\left(3\sigma^2-2\right)\left(1-\epsilon-r\right)+\sigma\sigma_k\gamma\right]\\
&+\dfrac{\sigma^2}{2}\epsilon+\left(1-\sigma^2\right)\left(1-\epsilon\right).
\end{align*}

The conditional mean value of $\sigma_{t+1}$ for $t \geq 1$ is
\begin{align*}
\langle\left.\sigma_{t+1}\right|\left\{\sigma_t\right\}\rangle = &\sum_{\sigma=\pm1,0}\sigma P\left(\left.\sigma_{t+1}=\sigma\right|\left\{\sigma_t\right\}\right)\\
= &\sum_{\sigma=\pm1}\sigma\left\{\dfrac{1}{2t}\sum_{k=1}^t\left[\sigma_k^2\left(3\sigma^2-2\right)\left(1-\epsilon-r\right)+\sigma\sigma_k\gamma\right]\right. \\
&\left.+\dfrac{\sigma^2}{2}\epsilon+\left(1-\sigma^2\right)\left(1-\epsilon\right)\right\}\\
= &\sum_{\sigma=\pm1}\sigma\left\{\dfrac{1}{2t}\sum_{k=1}^t\left[\sigma_k^2\left(1-\epsilon-r\right)+\sigma\sigma_k\gamma\right]+\dfrac{\epsilon}{2}\right\}\\
= &\sum_{\sigma=\pm1}\dfrac{1}{2t}\sum_{k=1}^t\sigma^2\sigma_k\gamma,
\end{align*}
hence,
\begin{align}\label{sigma1}
\langle\left.\sigma_{t+1}\right|\left\{\sigma_t\right\}\rangle = \dfrac{\gamma}{t}X_t.
\end{align}
It follows that
\begin{align*}
\langle\sigma_{t+1}\rangle = \dfrac{\gamma}{t}\langle X_t\rangle,
\end{align*}
therefore
\begin{align*}
\langle X_{t+1}\rangle = \left(1+\dfrac{\gamma}{t}\right)\langle X_t\rangle.
\end{align*}
By the initial condition $\langle X_1\rangle = 2s-1$,
\begin{align*}
\langle X_t\rangle = \left(2s-1\right)\dfrac{\Gamma\left(t+\gamma\right)}{\Gamma\left(1+\gamma\right)\Gamma\left(t\right)}.
\end{align*}
Since $\displaystyle\lim_{t\rightarrow\infty}\dfrac{\Gamma\left(t+\alpha\right)}{\Gamma\left(t\right)t^\alpha} = 1$, $\forall \alpha$,
\begin{align*}
\langle X_t\rangle \sim \dfrac{2s-1}{\Gamma\left(1+\gamma\right)} t^\gamma, \quad t \rightarrow \infty.
\end{align*}
For simplicity, we shall take $s=1/2$ below, and so $\langle X_t\rangle = 0$, the mean square displacement agrees with the second moment.

\subsubsection{Second moment $\langle X_t^2\rangle$}

The conditional mean value of $\sigma_{t+1}^2$ for $t \geq 1$ is
\begin{align*}
\langle\left.\sigma_{t+1}^2\right|\left\{\sigma_t\right\}\rangle = &\sum_{\sigma=\pm1,0}\sigma^2P\left(\left.\sigma_{t+1}=\sigma\right|\left\{\sigma_t\right\}\right)\\
= &\sum_{\sigma=\pm1}\sigma^2\left\{\dfrac{1}{2t}\sum_{k=1}^t\left[\sigma_k^2\left(3\sigma^2-2\right)\left(1-\epsilon-r\right)+\sigma\sigma_k\gamma\right]\right.\\
&\left.+\dfrac{\sigma^2}{2}\epsilon+\left(1-\sigma^2\right)\left(1-\epsilon\right)\right\}\\
= &\sum_{\sigma=\pm1}\left\{\dfrac{1}{2t}\sum_{k=1}^t\left[\sigma_k^2\left(1-\epsilon-r\right)+\sigma\sigma_k\gamma\right]+\dfrac{\epsilon}{2}\right\}\\
= &\sum_{\sigma=\pm1}\left\{\dfrac{1}{2t}\sum_{k=1}^t\sigma_k^2\left(1-\epsilon-r\right)+\dfrac{\epsilon}{2}\right\}\\
= &\dfrac{1-\epsilon-r}{t}\sum_{k=1}^t\sigma_k^2+\epsilon.
\end{align*}
It follows that
\begin{align*}
\langle\left.\sigma_{t+1}^2\right|\left\{\sigma_t\right\}\rangle &= \dfrac{1-\epsilon-r}{t}\sum_{k=1}^{t-1}\sigma_k^2+\dfrac{1-\epsilon-r}{t}\sigma_t^2+\epsilon\\
&= \dfrac{t-1}{t}\left(\dfrac{1\!-\!\epsilon\!-\!r}{t-1}\sum_{k=1}^{t-1}\sigma_k^2+\epsilon\right)-\dfrac{t-1}{t}\epsilon+\dfrac{1-\epsilon-r}{t}\sigma_t^2+\epsilon\\
&= \dfrac{t-1}{t}\langle\left.\sigma_t^2\right|\left\{\sigma_{t-1}\right\}\rangle+\dfrac{1-\epsilon-r}{t}\sigma_t^2+\dfrac{\epsilon}{t},
\end{align*}
so
\begin{align}\label{sigmarc}\begin{split}
\langle\sigma_1^2\rangle &= 1,\\
\langle\sigma_{t+1}^2\rangle &= \left(1-\dfrac{\epsilon+r}{t}\right)\langle\sigma_t^2\rangle+\dfrac{\epsilon}{t}.
\end{split}
\end{align}

Since
\begin{align*}
\langle\left.X_{t+1}^2\right|\left\{\sigma_t\right\}\rangle = X_t^2+2X_t\langle\left.\sigma_{t+1}\right|\left\{\sigma_t\right\}\rangle+\langle\left.\sigma_{t+1}^2\right|\left\{\sigma_t\right\}\rangle,
\end{align*}
by \eqref{sigma1},
\begin{align}\label{moment2rc}
\langle X_{t+1}^2\rangle = \left(1+\dfrac{2\gamma}{t}\right)\langle X_t^2\rangle+\langle\sigma_{t+1}^2\rangle.
\end{align}

To motivate the solution we shall present, let us consider the ODE analogue of the difference equations \eqref{sigmarc} and \eqref{moment2rc}.
\begin{equation}\label{ode}\begin{cases}
&x'+\dfrac{\epsilon+r}{t}x = \dfrac{\epsilon}{t},\\
&y'-\dfrac{2\gamma}{t}y = x.
\end{cases}
\end{equation}
The solution to \eqref{ode} is
\begin{align*}\begin{cases}
&x\left(t\right) = \dfrac{C}{t^{\epsilon+r}}+\dfrac{\epsilon}{\epsilon+r},\\
&y\left(t\right) = \dfrac{\epsilon}{\left(1-2\gamma\right)\left(\epsilon+r\right)}t+\dfrac{C}{1-\epsilon-r-2\gamma}t^{1-\epsilon-r}+Dt^{2\gamma},
\end{cases}
\end{align*}
if $\gamma \neq \dfrac{1}{2}$, and
\begin{align*}\begin{cases}
&x\left(t\right) = \dfrac{C}{t^{\epsilon+r}}+\dfrac{\epsilon}{\epsilon+r},\\
&y\left(t\right) = \dfrac{\epsilon}{\epsilon+r}t\ln t-\dfrac{C}{\epsilon+r}t^{1-\epsilon-r}+Dt,
\end{cases}
\end{align*}
if $\gamma = \dfrac{1}{2}$, where $C$ and $D$ are constants.

\begin{prop}\label{ppsigma}
The solution to \eqref{sigmarc} is
\begin{align}\label{sigmaform}
\langle\sigma_t^2\rangle = C\dfrac{\Gamma\left(t-\epsilon-r\right)}{\Gamma\left(t\right)}+\dfrac{\epsilon}{\epsilon+r},
\end{align}
where
\begin{align*}
C = \dfrac{r}{\left(\epsilon+r\right)\Gamma\left(1-\epsilon-r\right)}.
\end{align*}
\end{prop}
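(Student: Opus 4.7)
The plan is to treat the recurrence \eqref{sigmarc} as a linear first-order inhomogeneous difference equation, following the same pattern suggested by the ODE analogue \eqref{ode}: find a constant particular solution, solve the homogeneous part in product form, convert the product to a ratio of Gamma functions, and fix the constant from the initial datum $\langle\sigma_1^2\rangle = 1$.

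First I would search for a constant particular solution $\langle\sigma_t^2\rangle \equiv A$. Substitution in the recurrence yields $A = (1 - (\epsilon+r)/t)A + \epsilon/t$, so $A(\epsilon+r)/t = \epsilon/t$ and hence $A = \epsilon/(\epsilon+r)$, which matches the constant term in \eqref{sigmaform}. Next, setting $u_t = \langle\sigma_t^2\rangle - \epsilon/(\epsilon+r)$, the sequence $u_t$ satisfies the purely homogeneous recurrence $u_{t+1} = (1 - (\epsilon+r)/t)\, u_t$, so by iteration
\begin{align*}
u_t = u_1 \prod_{k=1}^{t-1}\left(1 - \frac{\epsilon+r}{k}\right) = u_1 \prod_{k=1}^{t-1}\frac{k-\epsilon-r}{k}.
\end{align*}

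The key manipulation is to rewrite this product using the shift identity $\Gamma(z+1) = z\,\Gamma(z)$: since $\prod_{k=1}^{t-1}(k-\epsilon-r) = \Gamma(t-\epsilon-r)/\Gamma(1-\epsilon-r)$ and $\prod_{k=1}^{t-1}k = \Gamma(t)/\Gamma(1) = \Gamma(t)$, we obtain $u_t = u_1\,\Gamma(1-\epsilon-r)^{-1}\,\Gamma(t-\epsilon-r)/\Gamma(t)$. Hence
\begin{align*}
\langle\sigma_t^2\rangle = \frac{u_1}{\Gamma(1-\epsilon-r)}\cdot\frac{\Gamma(t-\epsilon-r)}{\Gamma(t)} + \frac{\epsilon}{\epsilon+r},
\end{align*}
which has the form \eqref{sigmaform} with $C = u_1/\Gamma(1-\epsilon-r)$.

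Finally I would impose the initial condition. At $t=1$, $\Gamma(t-\epsilon-r)/\Gamma(t) = \Gamma(1-\epsilon-r)$, and $u_1 = 1 - \epsilon/(\epsilon+r) = r/(\epsilon+r)$, so
\begin{align*}
C = \frac{u_1}{\Gamma(1-\epsilon-r)} = \frac{r}{(\epsilon+r)\,\Gamma(1-\epsilon-r)},
\end{align*}
which is exactly the constant asserted in the proposition. There is no real obstacle here: the only mild subtlety is bookkeeping in the index of the product so that the Gamma-function ratio is shifted correctly (in particular, ensuring the exponent $-\epsilon-r$ appears with the right sign, matching the homogeneous solution $t^{-(\epsilon+r)}$ of \eqref{ode}), and verifying that $C$ and the particular solution together reproduce $\langle\sigma_1^2\rangle = 1$.
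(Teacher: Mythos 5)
Your proof is correct and follows essentially the same route as the paper: both decompose the solution of \eqref{sigmarc} into the constant particular solution $\epsilon/(\epsilon+r)$ plus a multiple of the homogeneous solution $\Gamma(t-\epsilon-r)/\Gamma(t)$, and then fix $C$ from $\langle\sigma_1^2\rangle=1$. The only difference is presentational: you \emph{derive} the homogeneous solution by telescoping the product $\prod_{k=1}^{t-1}\left(1-\tfrac{\epsilon+r}{k}\right)$ into a ratio of Gamma functions, whereas the paper simply \emph{verifies} the Gamma-ratio ansatz via $\Gamma(x+1)=x\Gamma(x)$.
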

\begin{proof}
Clearly,
\begin{align*}
\dfrac{\epsilon}{\epsilon+r} &= \left(1-\dfrac{\epsilon+r}{t}\right)\dfrac{\epsilon}{\epsilon+r}+\dfrac{\epsilon}{t},\\
\dfrac{\Gamma\left(t+1-\epsilon-r\right)}{\Gamma\left(t+1\right)} &= \left(1-\dfrac{\epsilon+r}{t}\right)\dfrac{\Gamma\left(t-\epsilon-r\right)}{\Gamma\left(t\right)},
\end{align*}
so a general solution to the recurrence equation in \eqref{sigmarc} is given by \eqref{sigmaform}.
The initial condition $\langle\sigma_1^2\rangle = 1$ implies $C = \dfrac{r}{\left(\epsilon+r\right)\Gamma\left(1-\epsilon-r\right)}$.
\end{proof}

It follows from Proposition \ref{ppsigma} and \eqref{moment2rc} that
\begin{align}\begin{split}\label{moment2rc1}
\langle X_1^2\rangle &= 1,\\
\langle X_{t+1}^2\rangle &= \left(1+\dfrac{2\gamma}{t}\right)\langle X_t^2\rangle+C\dfrac{\Gamma\left(t+1-\epsilon-r\right)}{\Gamma\left(t+1\right)}+\dfrac{\epsilon}{\epsilon+r}.
\end{split}
\end{align}

\begin{theo}\label{thmmoment2}
\begin{enumerate}[(1)]
\item If $\gamma \neq \dfrac{1}{2}$, the solution to \eqref{moment2rc1} is
\begin{align}\label{moment2}
\langle X_t^2\rangle = \dfrac{\epsilon}{\left(1-2\gamma\right)\left(\epsilon+r\right)}t+\dfrac{C}{1\!-\!\epsilon\!-\!r\!-\!2\gamma}\dfrac{\Gamma\left(t\!+\!1\!-\!\epsilon\!-\!r\right)}{\Gamma\left(t\right)}+D\dfrac{\Gamma\left(t+2\gamma\right)}{\Gamma\left(t\right)},
\end{align}
where
\begin{align*}
D = -\dfrac{1}{\Gamma\left(2\gamma\right)}\left[\dfrac{\epsilon}{\left(\epsilon+r\right)\left(1-2\gamma\right)}+\dfrac{r}{\left(\epsilon+r\right)\left(1-\epsilon-r-2\gamma\right)}\right].
\end{align*}
\item If $\gamma = \dfrac{1}{2}$, the solution to \eqref{moment2rc1} is
\begin{align}\label{moment21}
\langle X_t^2\rangle = \dfrac{\epsilon}{\epsilon+r}t\sum_{k=1}^t\dfrac{1}{k}-\dfrac{C}{\epsilon+r}\dfrac{\Gamma\left(t+1-\epsilon-r\right)}{\Gamma\left(t\right)}+Dt,
\end{align}
where
\begin{align*}
D = \dfrac{\epsilon}{\left(\epsilon+r\right)^2}-1.
\end{align*}
\end{enumerate}
\end{theo}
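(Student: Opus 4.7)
The plan is to treat \eqref{moment2rc1} as an inhomogeneous linear first-order recurrence with variable coefficient $1+2\gamma/t$ and two inhomogeneous terms (a constant $\epsilon/(\epsilon+r)$ and a Gamma-ratio $C\,\Gamma(t+1-\epsilon-r)/\Gamma(t+1)$). By linearity I would write the solution as a general solution of the homogeneous part plus two particular solutions, one for each piece of the forcing, and then fit the constant $D$ to the initial condition $\langle X_1^2\rangle = 1$. The ansatz for each particular solution is suggested by the ODE analogue already displayed in \eqref{ode}, and the discrete verification reduces to repeated use of the identity $\Gamma(t+1+a)/\Gamma(t+1)=((t+a)/t)\,\Gamma(t+a)/\Gamma(t)$.

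First I would dispose of the homogeneous equation $y_{t+1}=(1+2\gamma/t)y_t$; the identity above with $a=2\gamma$ shows that $y_t=D\,\Gamma(t+2\gamma)/\Gamma(t)$ is the general homogeneous solution. Next, for the constant forcing $\epsilon/(\epsilon+r)$ and $\gamma\neq 1/2$, I would substitute the affine ansatz $y_t=At$; the identity $A(t+1)=(1+2\gamma/t)At+\epsilon/(\epsilon+r)$ yields $A=\epsilon/[(1-2\gamma)(\epsilon+r)]$, which is exactly the first term of \eqref{moment2}. For the Gamma-ratio forcing, I would try $y_t=B\,\Gamma(t+1-\epsilon-r)/\Gamma(t)$; using the Gamma identity with $a=1-\epsilon-r$ and matching coefficients gives the algebraic condition $B(1-\epsilon-r-2\gamma)=C$, hence the second term of \eqref{moment2}. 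Both denominators are nonzero in the case $\gamma\neq 1/2$ (and even in the other case, $1-\epsilon-r-2\gamma=-(\epsilon+r)\neq 0$), so these formal computations are justified.

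The resonant case $\gamma=1/2$ is where the main subtlety lies: the affine ansatz $At$ collides with the homogeneous solution $D\,\Gamma(t+1)/\Gamma(t)=Dt$, so the naive coefficient $A$ blows up. Guided by the $t\ln t$ term in the ODE solution, I would try the discrete analogue $y_t=a\,t\,H_t$ with $H_t=\sum_{k=1}^{t}1/k$. A direct computation gives $y_{t+1}=a(t+1)H_t+a=(1+1/t)y_t+a$, so the choice $a=\epsilon/(\epsilon+r)$ produces the desired constant forcing and yields the first term of \eqref{moment21}. The second particular solution has the same Gamma-ratio form as before, now with $B=-C/(\epsilon+r)$.

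Finally, I would evaluate the full candidate $\langle X_t^2\rangle$ at $t=1$, use $\Gamma(2-\epsilon-r)=(1-\epsilon-r)\Gamma(1-\epsilon-r)$, $\Gamma(1+2\gamma)=2\gamma\Gamma(2\gamma)$, and the explicit value $C=r/[(\epsilon+r)\Gamma(1-\epsilon-r)]$ from Proposition \ref{ppsigma}, and solve the resulting scalar equation for $D$. The algebra is routine but has to be arranged carefully so that the various fractions combine into the compact form stated in the theorem; I expect that step, rather than the structural ansatz choices, to be where small errors most easily creep in, so I would double-check by verifying the recurrence directly at, say, $t=1$ and $t=2$ before declaring the closed form correct.
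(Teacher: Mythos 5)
Your proposal is correct and follows essentially the same route as the paper's proof: verify the homogeneous solution $D\,\Gamma(t+2\gamma)/\Gamma(t)$ and the two particular solutions term by term via the identity $\Gamma(t+1+a)/\Gamma(t+1)=\frac{t+a}{t}\,\Gamma(t+a)/\Gamma(t)$, resolve the resonance at $\gamma=\tfrac12$ with the $t\sum_{k=1}^{t}1/k$ ansatz (the paper's substitution $a_t=tb_t$ produces exactly this), and fix $D$ from $\langle X_1^2\rangle=1$. One small caveat: your remark that ``both denominators are nonzero in the case $\gamma\neq\tfrac12$'' overlooks $\gamma=(1-\epsilon-r)/2$, where $1-\epsilon-r-2\gamma=0$; the stated formula \eqref{moment2} is silently undefined there in the paper as well, so this is a shared omission rather than a defect of your argument.
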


\begin{proof}
Motivated by the ODE solution, we check the formula of the solution to \eqref{moment2rc1}.

If $\gamma \neq \dfrac{1}{2}$, by the identity $\Gamma\left(x+1\right) = x\Gamma\left(x\right)$,
\begin{align}
\dfrac{\epsilon}{\left(1-2\gamma\right)\left(\epsilon+r\right)}\left(t+1\right) = &\left(1+\dfrac{2\gamma}{t}\right)\dfrac{\epsilon}{\left(1-2\gamma\right)\left(\epsilon+r\right)}t+\dfrac{\epsilon}{\epsilon+r},\\
\nonumber\dfrac{C}{1\!-\!\epsilon\!-\!r\!-\!2\gamma}\dfrac{\Gamma\left(t\!+\!2\!-\!\epsilon\!-\!r\right)}{\Gamma\left(t+1\right)} = &\left(1+\dfrac{2\gamma}{t}\right)\dfrac{C}{1\!-\!\epsilon\!-\!r\!-\!2\gamma}\dfrac{\Gamma\left(t\!+\!1\!-\!\epsilon\!-\!r\right)}{\Gamma\left(t\right)}\\
\label{recur2}&+C\dfrac{\Gamma\left(t+1-\epsilon-r\right)}{\Gamma\left(t+1\right)},\\
\label{recur3}\dfrac{\Gamma\left(t+1+2\gamma\right)}{\Gamma\left(t+1\right)} = &\left(1+\dfrac{2\gamma}{t}\right)\dfrac{\Gamma\left(t+2\gamma\right)}{\Gamma\left(t\right)}.
\end{align}
Hence a general solution to the recurrence equation in \eqref{moment2rc1} is given by \eqref{moment2} for some constant $D$. Then $\langle X_1^2\rangle = 1$ and $C = \dfrac{r}{\left(\epsilon\!+\!r\right)\Gamma\left(1\!-\!\epsilon\!-\!r\right)}$ imply
\begin{align*}
\dfrac{\epsilon}{\left(1\!-\!2\gamma\right)\left(\epsilon\!+\!r\right)}+\dfrac{r\Gamma\left(2\!-\!\epsilon\!-\!r\right)}{\left(\epsilon\!+\!r\right)\left(1\!-\!\epsilon\!-\!r\!-\!2\gamma\right)\Gamma\left(1\!-\!\epsilon\!-\!r\right)}+D\Gamma\left(1\!+\!2\gamma\right) = 1,
\end{align*}
so
\begin{align*}
D = -\dfrac{1}{\Gamma\left(2\gamma\right)}\left[\dfrac{\epsilon}{\left(\epsilon+r\right)\left(1-2\gamma\right)}+\dfrac{r}{\left(\epsilon+r\right)\left(1-\epsilon-r-2\gamma\right)}\right].
\end{align*}

If $\gamma = \dfrac{1}{2}$, \eqref{recur2} and \eqref{recur3} still hold,
\begin{align*}
-\dfrac{C}{\epsilon+r}\dfrac{\Gamma\left(t+2-\epsilon-r\right)}{\Gamma\left(t+1\right)} = &\left(1+\dfrac{1}{t}\right)\left(-\dfrac{C}{\epsilon+r}\dfrac{\Gamma\left(t+1-\epsilon-r\right)}{\Gamma\left(t\right)}\right)\\
&+C\dfrac{\Gamma\left(t+1-\epsilon-r\right)}{\Gamma\left(t+1\right)},\\
t+1 = &\left(1+\dfrac{1}{t}\right)t.
\end{align*}
For the recurrence relation
\begin{align*}
a_{t+1} = \left(1+\dfrac{1}{t}\right)a_t+\dfrac{\epsilon}{\epsilon+r},
\end{align*}
suppose $a_t = tb_t$, then
\begin{align*}
b_{t+1} = b_t + \dfrac{\epsilon}{\epsilon+r}\dfrac{1}{t+1},
\end{align*}
so for $t \geq 1$,
\begin{align*}
b_t = b_0+\dfrac{\epsilon}{\epsilon+r}\sum_{k=1}^t\dfrac{1}{k}.
\end{align*}
Set $b_0 = 0$, then
\begin{align*}
a_t = \dfrac{\epsilon}{\epsilon+r}t\sum_{k=1}^t\dfrac{1}{k}.
\end{align*}
Hence a general solution to the recurrence equation in \eqref{moment2rc1} in this case is \eqref{moment21}. Similarly, the initial condition gives
\begin{align*}
D = \dfrac{\epsilon}{\left(\epsilon+r\right)^2}-1.
\end{align*}
\end{proof}

The corollary below follows from \eqref{moment2} and \eqref{moment21}.
\begin{cor}
\begin{enumerate}[(1)]
\item If $\gamma \neq \dfrac{1}{2}$,
\begin{align*}
\langle X_t^2\rangle \sim \dfrac{\epsilon}{\left(1-2\gamma\right)\left(\epsilon+r\right)}t+\dfrac{C}{1-\epsilon-r-2\gamma}t^{1-\epsilon-r}+Dt^{2\gamma}, \quad t \rightarrow \infty.
\end{align*}
\item If $\gamma = \dfrac{1}{2}$,
\begin{align*}
\langle X_t^2\rangle \sim \dfrac{\epsilon}{\epsilon+r}t\ln t-\dfrac{C}{\epsilon+r}t^{1-\epsilon-r}+Dt, \quad t \rightarrow \infty.
\end{align*}
\end{enumerate}
\end{cor}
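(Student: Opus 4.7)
The plan is to read off the corollary directly from the closed-form expressions for $\langle X_t^2\rangle$ in Theorem \ref{thmmoment2}, by replacing each Gamma-function ratio (and, in case (2), the harmonic partial sum) with its leading large-$t$ asymptotic. The only analytic input required is the classical estimate
\begin{equation*}
\frac{\Gamma(t+\alpha)}{\Gamma(t)} = t^\alpha\bigl(1+O(1/t)\bigr) \quad (t\to\infty),
\end{equation*}
valid for any fixed real $\alpha$ and already invoked earlier in the excerpt for the first moment, together with the harmonic-sum expansion $\sum_{k=1}^t 1/k = \ln t + O(1)$.

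For case (1), with $\gamma \neq 1/2$, the formula \eqref{moment2} is a sum of three terms: the explicit linear term $\frac{\epsilon}{(1-2\gamma)(\epsilon+r)}t$ and the two Gamma ratios $\frac{C}{1-\epsilon-r-2\gamma}\,\Gamma(t+1-\epsilon-r)/\Gamma(t)$ and $D\,\Gamma(t+2\gamma)/\Gamma(t)$. Applying the asymptotic above with $\alpha=1-\epsilon-r$ and $\alpha=2\gamma$ in turn converts the two Gamma ratios into $t^{1-\epsilon-r}$ and $t^{2\gamma}$ respectively, up to multiplicative factors of $1+O(1/t)$; the resulting remainders are of order $t^{-\epsilon-r}$ and $t^{2\gamma-1}$, each strictly smaller than the corresponding leading power, so the three displayed summands capture the full asymptotic behavior as stated. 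For case (2), the Gamma asymptotic applied to $\Gamma(t+1-\epsilon-r)/\Gamma(t)$ turns the middle term of \eqref{moment21} into $-\frac{C}{\epsilon+r}t^{1-\epsilon-r}$, while the harmonic-sum estimate gives $t\sum_{k=1}^t 1/k = t\ln t + O(t)$; combined with the already explicit $Dt$ term, this yields the claimed right-hand side.

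I do not anticipate any substantive obstacle, since the heavy lifting has already been carried out in Theorem \ref{thmmoment2}, where the general solution to the recurrence was constructed and the constants $C$ and $D$ fixed from the initial condition $\langle X_1^2\rangle=1$. The only bookkeeping point worth emphasizing is the tracking of the $1+O(1/t)$ corrections: this ensures that each of the three summands on the right-hand side of the stated asymptotic genuinely captures the corresponding power-law or $t\ln t$ component up to a strictly lower-order error. One should also note the continuity between the two cases: the coefficient $\tfrac{\epsilon}{(1-2\gamma)(\epsilon+r)}$ in case (1) becomes singular as $\gamma\uparrow 1/2$, and the $t\ln t$ term in case (2) is precisely the expected resonant replacement, mirroring the analogous resonance in the ODE system \eqref{ode} presented earlier.
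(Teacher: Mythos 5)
Your proposal is correct and is exactly the argument the paper intends: the paper offers no written proof beyond the remark that the corollary ``follows from \eqref{moment2} and \eqref{moment21},'' and the implicit step is precisely your substitution of $\Gamma(t+\alpha)/\Gamma(t)\sim t^{\alpha}$ (already quoted earlier in the paper for the first moment) and $\sum_{k=1}^{t}1/k=\ln t+O(1)$ into the exact formulas of Theorem \ref{thmmoment2}. Your added bookkeeping of the $1+O(1/t)$ corrections only makes explicit what the paper leaves unsaid.
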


\subsection{Residual Diffusivity}
The occurrence of residual diffusivity relies on the choice of $\gamma $ as a function of $\epsilon$. 
To this end, we show three cases: 1) case 1 only recovers the un-perturbed diffusivity, 2) case 2 reveals the residual diffusivity exceeding the un-perturbed diffusivity in the limit of $\epsilon \downarrow 0$, 
3) case 3 results in residual super-diffusivity. The cases 2 and 3 are illustrated in Fig. 1. As $\epsilon \rightarrow 0$, the parameter region of the residual diffusion shrinks towards $\gamma = \dfrac{1}{2}$ while the enhanced diffusivity remains strictly above the un-perturbed diffusivity.

\subsubsection{Regular diffusivity: $\gamma = \dfrac{1-\epsilon}{2}$.}
Let $\gamma = \dfrac{1-\epsilon}{2}$, then $D = 0$ and
\begin{align*}
\langle X_t^2\rangle = \dfrac{1}{\left(\epsilon+r\right)}t-\dfrac{1}{\left(\epsilon+r\right)\Gamma\left(1-\epsilon-r\right)}\dfrac{\Gamma\left(t+1-\epsilon-r\right)}{\Gamma\left(t\right)},
\end{align*}
so
\begin{align*}
\langle X_t^2\rangle \sim \dfrac{1}{\left(\epsilon+r\right)}t-\dfrac{1}{\left(\epsilon+r\right)\Gamma\left(1-\epsilon-r\right)}t^{1-\epsilon-r}, \quad t \rightarrow \infty,
\end{align*}
and diffusivity equals $\dfrac{1}{\epsilon+r}$. 
\medskip

For fixed $r \in \left(0, \dfrac{1}{2}\right)$, let $\epsilon \in \left(0, 1\right)$, then
\begin{align*}
p = \dfrac{3-\epsilon-2r}{4}, \quad q = \dfrac{1+\epsilon-2r}{4}.
\end{align*}

Recall the second moment formula of \cite{NUK10} (equation (18)),
\ba
\langle X_t^2\rangle & = &
 \dfrac{1}{\left(2\gamma+r-1\right)\Gamma\left(t\right)}\left(\dfrac{\Gamma\left(t+2\gamma\right)}{\Gamma\left(2\gamma\right)}-\dfrac{\Gamma\left(1+t-r\right)}{\Gamma\left(1-r\right)}\right)
\no \\
& \sim & 
\dfrac{1}{\left(2\gamma+r-1\right)}\left(\dfrac{t^{2\gamma}}{\Gamma\left(2\gamma\right)}-\dfrac{t^{1-r}}{\Gamma\left(1-r\right)}\right),
\label{var0}
\ea
which is diffusive at $\gamma = 1/2$ with diffusivity $1/r$. 
\medskip 

We see that for $\gamma = (1-\epsilon)/2$, $\epsilon \in \left(0, 1\right)$ and the above $\left(p, q\right)$, 
the diffusivity of the perturbed ERW problem $1/\left(\epsilon+r\right)$ approaches $1/r$, the diffusivity of the un-perturbed model as $\epsilon \downarrow 0$. Hence no residual diffusivity exists.  

\subsubsection{Residual diffusivity: $\gamma = \dfrac{1-\epsilon r}{2}$.}

Let $\gamma = \dfrac{1-\epsilon r}{2}$, then
\begin{align*}
\langle X_t^2\rangle = &\dfrac{1}{r\left(\epsilon+r\right)}t-\dfrac{r\Gamma\left(t+1-\epsilon-r\right)}{\left(\epsilon+r\right)\left(\epsilon+r-\epsilon r\right)\Gamma\left(1-\epsilon-r\right)\Gamma\left(t\right)}\\
&-\dfrac{1}{\Gamma\left(1-\epsilon r\right)}\left[\dfrac{1}{r\left(\epsilon+r\right)}-\dfrac{r}{\left(\epsilon+r\right)\left(\epsilon+r-\epsilon r\right)}\right]\dfrac{\Gamma\left(t+1-\epsilon r\right)}{\Gamma\left(t\right)},
\end{align*}
and
\begin{align*}
\langle X_t^2\rangle \sim &\dfrac{1}{r\left(\epsilon+r\right)}t-\dfrac{r}{\left(\epsilon+r\right)\left(\epsilon+r-\epsilon r\right)\Gamma\left(1-\epsilon-r\right)}t^{1-\epsilon-r}\\
&-\dfrac{1}{\Gamma\left(1-\epsilon r\right)}\left[\dfrac{1}{r\left(\epsilon+r\right)}-\dfrac{r}{\left(\epsilon+r\right)\left(\epsilon+r-\epsilon r\right)}\right]t^{1-\epsilon r}, \quad t \rightarrow \infty.
\end{align*}
Hence
\begin{align*}
\lim_{t\rightarrow\infty}\dfrac{\langle X_t^2\rangle}{t} = \dfrac{1}{r\left(\epsilon+r\right)}.
\end{align*}

The diffusivity $\dfrac{1}{r\left(\epsilon+r\right)}$ can be much larger than $\dfrac{1}{r}$ in the un-perturbed model. 
In particular, given any $\delta > 0$, let $r_0 = \min\left\{\dfrac{1}{3}, \dfrac{1}{\delta}\right\}$, then for $r \in \left(0, r_0\right)$, $\epsilon \in \left(0, \dfrac{1}{6}\right)$,
\begin{align*}
\dfrac{1}{r\left(\epsilon+r\right)}-\dfrac{1}{r} = \dfrac{1}{r}\left(\dfrac{1}{\epsilon+r}\!-\!1\right) > \dfrac{1}{r_0}\left(\dfrac{1}{\frac{1}{6}+r_0}\!-\!1\right) \geq \delta\!\left(\dfrac{1}{\frac{1}{6}\!+\!\frac{1}{3}}\!-\!1\right) = \delta.
\end{align*}
The {\bf new diffusive region with residual diffusivity} is the {\bf wedge to the left of $\gamma = 1/2 $ covered by the dashed lines in Fig. 1}.

\subsubsection{Residual super-diffusivity: $\gamma = \dfrac{1+\epsilon r}{2}$}

If $\gamma = \dfrac{1+\epsilon r}{2}$, then
\begin{align*}
\langle X_t^2\rangle = &-\dfrac{1}{r\left(\epsilon+r\right)}t-\dfrac{r\Gamma\left(t+1-\epsilon-r\right)}{\left(\epsilon+r\right)\left(\epsilon+r+\epsilon r\right)\Gamma\left(1-\epsilon-r\right)\Gamma\left(t\right)}\\
&+\dfrac{1}{\Gamma\left(1+\epsilon r\right)}\left[\dfrac{1}{r\left(\epsilon+r\right)}+\dfrac{r}{\left(\epsilon+r\right)\left(\epsilon+r+\epsilon r\right)}\right]\dfrac{\Gamma\left(t+1+\epsilon r\right)}{\Gamma\left(t\right)},
\end{align*}
and
\begin{align*}
\langle X_t^2\rangle \sim &-\dfrac{1}{r\left(\epsilon+r\right)}t-\dfrac{r}{\left(\epsilon+r\right)\left(\epsilon+r+\epsilon r\right)\Gamma\left(1-\epsilon-r\right)}t^{1-\epsilon-r}\\
&+\dfrac{1}{\Gamma\left(1+\epsilon r\right)}\left[\dfrac{1}{r\left(\epsilon+r\right)}+\dfrac{r}{\left(\epsilon+r\right)\left(\epsilon+r+\epsilon r\right)}\right]t^{1+\epsilon r}, \quad t \rightarrow \infty.
\end{align*}
Thus at any $\epsilon > 0$, super-diffusion arises and
\begin{align*}
\lim_{t\rightarrow\infty}\dfrac{\langle X_t^2\rangle}{t^{1+\epsilon r}} = \dfrac{1}{\Gamma\left(1+\epsilon r\right)}\left[\dfrac{1}{r\left(\epsilon+r\right)}+\dfrac{r}{\left(\epsilon+r\right)\left(\epsilon+r+\epsilon r\right)}\right].
\end{align*}
As $\eps \downarrow 0$, the super-diffusivity tends to $r^{-2} + r^{-1} > r^{-1}$ the limiting 
super-diffusivity of the un-perturbed model as seen from (\ref{var0}).
The residual super-diffusive region is the wedge covered by lines to the right of $\gamma > 1/2$ in Fig. 1. 

\begin{figure}[H]
\centering
\begin{tabular}{c}
\includegraphics[width=.65\textwidth]{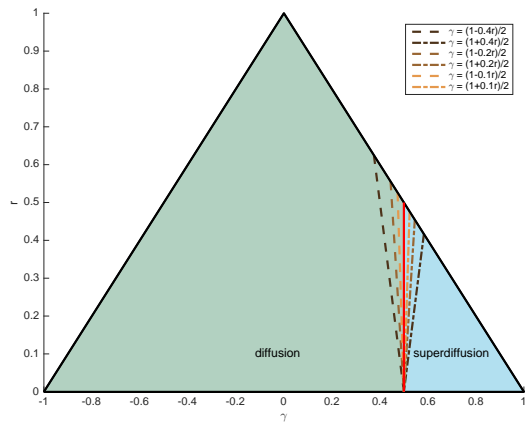}
\end{tabular}
\caption{Regions of residual diffusivity (wedge left of $\gamma=1/2$) and residual 
super-diffusivity (wedge right of $\gamma =1/2$) covered by the dashed lines at $\epsilon = 0.4, 0.2, 0.1$.}\label{fig1}
\end{figure}

\section{2D Perturbed ERWS Model}
In this section, we generalize our model to the two dimensional square lattice.
Let $\mathbf{i}, \mathbf{j}$ be the standard basis in 2D. Denote the position of the walker at time $t$ by $\boldsymbol{X}_t$,
\begin{align*}
\boldsymbol{X}_{t+1} = \boldsymbol{X}_t+\bsig_{t+1},
\end{align*}
where $\bsig_{t+1} \in \left\{\mathbf{i}, \mathbf{j}, -\mathbf{i}, -\mathbf{j}\right\}$. Let $s_i \in \left(0, 1\right)$, $i = 1, \dots, 4$ and the process is started by allowing the walker to move to the right, upward, to the left, downward with probability $s_1, \dots, s_4$. Let $p, q, q', r, \epsilon \in \left(0, 1\right)$ and $p+q+q'+r = 1$,
\begin{align*}
A = \begin{bmatrix}
0 & -1\\
1 & 0
\end{bmatrix}.
\end{align*}
For $t \geq 1$, a random $k \in \left\{1, \dots, t\right\}$ is chosen with uniform probability.
\begin{enumerate}[(i)]
\item If $\left|\bsig_k\right| = 1$,
\begin{align*}
&P\left(\bsig_{t+1}=\bsig_k\right) = p,\\
&P\left(\bsig_{t+1}=-\bsig_k\right) = q,\\
&P\left(\bsig_{t+1}=A\bsig_k\right) = p',\\
&P\left(\bsig_{t+1}=A^{-1}\bsig_k\right) = q',\\
&P\left(\bsig_{t+1}=\mathbf{0}\right) = r.
\end{align*}
\item If $\left|\bsig_k\right| = 0$,
\begin{align*}
&P\left(\bsig_{t+1}=\mathbf{i}\right) = P\left(\bsig_{t+1}=\mathbf{j}\right) = P\left(\bsig_{t+1}=-\mathbf{i}\right) = P\left(\bsig_{t+1}=-\mathbf{j}\right) = \epsilon/4,\\
&P\left(\bsig_{t+1}=\mathbf{0}\right) = 1-\epsilon.
\end{align*}
\end{enumerate}

Let $\gamma = p-q$, $\gamma' = p'-q'$, then for $t \geq 1$,
\begin{align*}
P\left(\left.\bsig_{t+1}\!=\!\bsig\right|\!\left\{\bsig_t\right\}\right) = &\dfrac{1}{t}\sum_{k=1}^t\left[\bsig_k\!\cdot\!\bsig\left(\bsig_k\!\cdot\!\bsig\!+\!1\right)\dfrac{p}{2}+\bsig_k\!\cdot\!\bsig\left(\bsig_k\!\cdot\!\bsig\!-\!1\right)\dfrac{q}{2}\right.\\
&+\bsig_k\!\cdot\!A\bsig\left(\bsig_k\!\cdot\!A\bsig\!+\!1\right)\dfrac{p'}{2}+\bsig_k\!\cdot\!A\bsig\left(\bsig_k\!\cdot\!A\bsig\!-\!1\right)\dfrac{q'}{2}\\
&\left.+\left(1-\left|\bsig_k\right|^2\right)\dfrac{\epsilon}{4}\right]\\
= &\dfrac{1}{2t}\sum_{k=1}^t\left[\bsig_k\!\cdot\!\bsig\gamma+\bsig_k\cdot A\bsig\gamma'+\left(\bsig_k\!\cdot\!\bsig\right)^2\left(p+q\right)\right.\\
&\left.+\left(\bsig_k\!\cdot\!A\bsig\right)^2\left(p'+q'\right)-\dfrac{1}{2}\left|\bsig_k\right|^2\epsilon\right]+\dfrac{\epsilon}{4},
\end{align*}
for $\left|\bsig\right| = 1$, and
\begin{align*}
P\left(\left.\bsig_{t+1}=\mathbf{0}\right|\left\{\bsig_t\right\}\right) &= \dfrac{1}{t}\sum_{k=1}^t\left[\left|\bsig_k\right|^2r+\left(1-\left|\bsig_k\right|^2\right)\left(1-\epsilon\right)\right]\\
&= \dfrac{1}{t}\sum_{k=1}^t\left|\boldsymbol{\bsig}_k\right|^2\left(r+\epsilon-1\right)+1-\epsilon.
\end{align*}

The conditional mean of $\bsig_{t+1}$ for $t \geq 1$ is
\begin{align*}
\langle\left.\bsig_{t+1}\right|\left\{\bsig_t\right\}\rangle = &\sum_{\left|\bsig\right|=1}P\left(\left.\bsig_{t+1}=\bsig\right|\left\{\bsig_t\right\}\right)\bsig\\
= &\dfrac{1}{2t}\sum_{k=1}^t\sum_{\left|\bsig\right|=1}\left[\bsig_k\cdot\bsig\gamma+\bsig_k\cdot A\bsig\gamma'+\left(\bsig_k\cdot\bsig\right)^2\left(p+q\right)\right.\\
&\left.+\left(\bsig_k\cdot A\bsig\right)^2\left(p'+q'\right)-\dfrac{1}{2}\left|\bsig_k\right|^2\epsilon\right]\bsig\\
= &\dfrac{1}{2t}\sum_{k=1}^t\sum_{\left|\bsig\right|=1}\left(\bsig_k\cdot\bsig\gamma+\bsig_k\cdot A\bsig\gamma'\right)\bsig\\
= &\dfrac{1}{2t}\sum_{k=1}^t\sum_{\left|\bsig\right|=1}\left(\bsig_k\cdot\bsig\gamma+A\bsig_k\cdot\bsig\gamma'\right)\bsig\\
= &\dfrac{1}{2t}\sum_{k=1}^t2\left(\gamma\bsig_k+\gamma'A\bsig_k\right)\\
= &\dfrac{1}{t}\left(\gamma+\gamma'A\right)\boldsymbol{X}_t.
\end{align*}
Here the symmetry of $\pm\mathbf{i}$, $\pm\mathbf{j}$ is used. Thus,
\begin{align*}
\langle \boldsymbol{X}_{t+1} \rangle = \left(1+\dfrac{\gamma}{t}+\dfrac{\gamma'}{t}A\right)\langle \boldsymbol{X}_t \rangle.
\end{align*}

The conditional mean of $\left|\bsig_{t+1}\right|^2$ for $t \geq 1$ is
\begin{align*}
\langle\left.\left|\bsig_{t+1}\right|^2\right|\left\{\bsig_t\right\}\rangle = &\sum_{\left|\bsig\right|=1}P\left(\left.\bsig_{t+1}=\bsig\right|\left\{\bsig_t\right\}\right)\left|\bsig\right|^2\\
= &\dfrac{1}{2t}\sum_{k=1}^t\sum_{\left|\bsig\right|=1}\left[\bsig_k\cdot\bsig\gamma+\bsig_k\cdot A\bsig\gamma'+\left(\bsig_k\cdot\bsig\right)^2\left(p+q\right)\right.\\
&\left.+\left(\bsig_k\cdot A\bsig\right)^2\left(p'+q'\right)-\dfrac{1}{2}\left|\bsig_k\right|^2\epsilon\right]+\epsilon\\
= &\dfrac{1}{2t}\sum_{k=1}^t2\left(p+q+p'+q'-\epsilon\right)\left|\bsig_k\right|^2+\epsilon\\
= &\dfrac{1-\epsilon-r}{t}\sum_{k=1}^t\left|\bsig_k\right|^2+\epsilon.
\end{align*}
Similar to the 1D case,
\begin{align*}
\langle\left.\left|\bsig_{t+1}\right|^2\right|\left\{\bsig_t\right\}\rangle = \dfrac{t-1}{t}\langle\left.\left|\bsig_t\right|^2\right|\left\{\bsig_{t-1}\right\}\rangle+\dfrac{1-\epsilon-r}{t}\left|\bsig_t\right|^2+\dfrac{\epsilon}{t},
\end{align*}
so
\begin{align*}
\langle\left|\bsig_1\right|^2\rangle &= 1,\\
\langle\left|\bsig_{t+1}\right|^2\rangle &= \left(1-\dfrac{\epsilon+r}{t}\right)\langle\left|\bsig_t\right|^2\rangle+\dfrac{\epsilon}{t}.
\end{align*}
Moreover,
\begin{align*}
\langle\left.\left|\boldsymbol{X}_{t+1}\right|^2\right|\left\{\sigma_t\right\}\rangle &= \left|\boldsymbol{X}_t\right|^2+2\boldsymbol{X}_t\cdot\langle\left.\bsig_{t+1}\right|\left\{\bsig_t\right\}\rangle+\langle\left.\bsig_{t+1}^2\right|\left\{\bsig_t\right\}\rangle\\
&= \left|\boldsymbol{X}_t\right|^2+2\boldsymbol{X}_t\cdot\dfrac{1}{t}\left(\gamma+\gamma'A\right)\boldsymbol{X}_t+\langle\left.\bsig_{t+1}^2\right|\left\{\bsig_t\right\}\rangle\\
&= \left(1+\dfrac{2\gamma}{t}\right)\left|\boldsymbol{X}_t\right|^2+\langle\left.\bsig_{t+1}^2\right|\left\{\bsig_t\right\}\rangle,
\end{align*}
hence
\begin{align*}
\langle \left|\boldsymbol{X}_{t+1}\right|^2\rangle = \left(1+\dfrac{2\gamma}{t}\right)\langle\left|\boldsymbol{X}_t\right|^2\rangle+\langle\bsig_{t+1}^2\rangle.
\end{align*}
By Proposition \ref{ppsigma} and Theorem \ref{thmmoment2},
\begin{align*}
\langle\left|\bsig_t\right|^2\rangle &= C\dfrac{\Gamma\left(t-\epsilon-r\right)}{\Gamma\left(t\right)}+\dfrac{\epsilon}{\epsilon+r},\\
\langle\left|\boldsymbol{X}_t\right|^2\rangle &= \dfrac{\epsilon}{\left(1\!-\!2\gamma\right)\left(\epsilon\!+\!r\right)}t+\dfrac{C}{1\!-\!\epsilon\!-\!r\!-\!2\gamma}\dfrac{\Gamma\left(t\!+\!1\!-\!\epsilon\!-\!r\right)}{\Gamma\left(t\right)}+D\dfrac{\Gamma\left(t\!+\!2\gamma\right)}{\Gamma\left(t\right)},
\end{align*}
where
\begin{align*}
C &= \dfrac{r}{\left(\epsilon+r\right)\Gamma\left(1-\epsilon-r\right)},\\
D &= -\dfrac{1}{\Gamma\left(2\gamma\right)}\left[\dfrac{\epsilon}{\left(\epsilon+r\right)\left(1-2\gamma\right)}+\dfrac{r}{\left(\epsilon+r\right)\left(1-\epsilon-r-2\gamma\right)}\right].
\end{align*}
Due to the above moment formulas, the residual diffusivity results in 1D extend verbatim to the 2D model. 

\section{Conclusions}
We found that residual diffusivity occurs in ERWS models in one and two dimensions
with an inclusion of small probability of symmetric random walk steps. A wedge like sub-diffusive parameter region 
in the $(r,\gamma)$ plane transitions into a diffusive region with residual diffusivity in the sense that 
the enhanced diffusivity strictly exceeds the un-perturbed diffusivity in the limit of vanishing symmetric random walks. 
In future work, we plan to identify other discrete stochastic models for residual diffusivity so that 
the region where this occurs remains distinct from the un-perturbed diffusivity region in the limit of 
vanishing diffusive perturbations. 

\medskip

\end{document}